\pgfplotsset{compat=1.10}
\renewcommand{\PrintDOI}[1]{\href{http://dx.doi.org/\detokenize{#1}}{doi: \detokenize{#1}}%
  \IfEmptyBibField{pages}{, (to appear in print)}{}}
\def\commutatif{\ar@{}[rd]|{\circlearrowleft}}
\newtheorem{thm}{Theorem}[section]
\newtheorem{pro}[thm]{Proposition}
\newtheorem{cor}[thm]{Corollary}
\theoremstyle{definition}
\newtheorem{df}[thm]{Definition}
\theoremstyle{remark}
\newtheorem{rmk}[thm]{Remark}
\newtheorem{ex}[thm]{Example}
\newcommand{\Z}{\mathbb{Z}}
\newcommand\rt{\triangleright}
\author{Boris Tsvelikhovskiy} 
\address{Department of Mathematics, 
	Northeastern University, Boston MA 02115} 
\email{tsvelikhovskiy.b@husky.neu.edu}
\title{Nontrivial Topological Quandles}
\begin{document}

\maketitle 
\begin{abstract}
We show that there are infinitely many nonisomorphic quandle structures on any topogical space $X$ of positive dimension. In particular, we disprove Conjecture $5.2$ in \cite{CES}, asserting that there are no nontrivial quandle structures on the closed unit interval $[0,1]$.

 \end{abstract}
\section{Introduction}
Quandles are generally non-associative algebraic structures (the exception being the trivial quandles).  They were introduced independently in the 1980's by Joyce \cite{Joyce} and Matveev \cite{Matveev} with the purpose of constructing invariants of knots in the three space and knotted surfaces in four space.  However, the notion of a quandle can be traced back to the 1940's in the work of Mituhisa Takasaki \cite{Takasaki}.  The three axioms of a quandle algebraically encode the three Reidemeister moves in classical knot theory.  For a recent treatment of quandles (see \cite{EN}).  Joyce and Matveev introduced the notion of the fundamental quandle of a knot and gave a theorem that brings the problem of equivalence of knots to the problem of the quandle isomorphism of their fundamental quandles.  Precisely, two knots $K_1$ and $K_2$ are equivalent (up to reverse and mirror image) if and only if the fundamental quandles $Q(K_1)$ and  $Q(K_2)$ are isomorphic. 
\par
Recently, there has been investigations of quandles from algebraic point of views and their relations to other algebraic structures such as Lie algebras \cite{CCES1,CCES2}, Leibniz algebras \cite{Kinyon,KW}, Frobenius algebras and Yang Baxter equation \cite{CCEKS}, Hopf algebras \cite{AG}, transitive groups \cite{Vend}, quasigroups and Moufang loops \cite{Elhamdadi}, ring theory \cite{BPS,EFT} etc. 

\par
The notion of topological quandles was introduced by Rubinsztein in \cite{Rub}. A \textit{topological rack} $X$ is a topological space with a rack binary operation $f(x,y):
X \times X \rightarrow X$, s.t. $f(x,y)$ is continuous with respect to the
topological structure, the right multiplication $R_x: X\rightarrow X, y \mapsto f(y,x)$ is a
homeomorphism for any $x\in X$, and satisfies the right distributivity  $f(f(x,y),z)=f(f(x,z),f(y,z)) ~ \forall x, y, z \in X$. The initial paper contained plenty of examples of such structures (see examples $2.1-2.8$ therein). Using the braid group action $B_n$ on the cartesian product of $n$ copies of a  topological quandle $(Q,f)$ defined on the generators $\sigma_{ii+1}\in B_n$ via $\sigma_{ii+1}(x_1,\hdots x_n)=(x_1,\hdots, x_i,f(x_i,x_{i+1}) ,x_{i+2}\hdots,x_n)$, the author associates  the space $J_{(Q,f)}(L)$ of fixed points under the action of the braid $\sigma\in B_n$ for the element $\sigma$ corresponding to the oriented link $L$. The main result of the paper was that every topological quandle  the space $J_{(Q,f)}(L)$ for an oriented link $L$  depends only on the isotopy class of the oriented link $L$. (see Sections $3,4$ of \cite{Rub} for details).
\par
The main goal of this paper is to show how to produce topological quandle structures on topological manifolds.
\par
Our exposition is organized as follows. In Section $2$ we recall the definition and basic concepts of quandles with examples. The core of the paper is Section $3$, where, after recalling the generalities on topological quandles, we explain a construction, which  produces nontrivial topological quandle structures on topological manifolds. Applying this construction allows to obtain such structures on the closed interval $[0,1]$ and, using that any two closed intervals are homeomorphic, on any closed interval $[a,b]$. In particular, this implies that Conjecture $5.2$ arisen in \cite{CES} that the only quandle operation on a closed interval is the trivial one was wrong. Furthermore, it is shown that there are infinitely many nonisomorphic topological quandle structures on the closed interval and, in general, on any topological manifold of dimension greater than zero. 
\par
In Section $4$ we make concluding remarks and propose possible directions for further investigation of the subject.

\textbf{Acknowledgement:}
I would like to thank Mohamed Elhamdadi for introducing me to the subject, bringing my attention to Conjecture $5.2$ in \cite{CES} and warm hospitality during my stay at the University of South Florida. I also thank Mohamed Elbehiry for stimulating discussions and plenty of helpful suggestions on the improvement of the exposition.
\section{Review of Quandles}\label{review}
We start this section by giving the basics of quandles with examples.
\begin{df}\label{quandledef}
A {\it quandle}, $X$, is a set with a binary operation $(a, b) \mapsto  a \rt b$ such that

(I) For any $a \in X$,
$a\rt a =a$.

(II) For any $a,b \in X$, there is a unique $c \in X$ such that
$a= c\rt b$.

(III)
For any $a,b,c \in X$, we have
$ (a \rt b) \rt c=(a\rt c)\rt (b\rt c). $
\end{df}

\noindent A {\it rack} is a set with a binary operation that
satisfies (II) and (III). Racks and quandles have been studied
extensively in, for example, \cite{Joyce,Matveev}.  For more details on racks and quandles see the book \cite{EN}.

The following are typical examples of quandles: 
\begin{itemize}
\item
A group $G$ with
conjugation as the quandle operation: $a \rt b = b^{-1} a b$,
denoted by $X=$ Conj$(G)$, is a quandle. 

\item
Any subset of $G$ that is closed under such conjugation is also a quandle. More generally if
$G$ is a group, $H$ is a subgroup, and $\sigma$ is an automorphism that
fixes the elements of $H$  ({\it i.e.} $\sigma(h)=h \ \forall h \in
H$), then $G/H$ is a quandle with $\rt $ defined by $Ha\rt
Hb=H \sigma(ab^{-1})b.$ 

\item
Any ${\Z }[t, t^{-1}]$-module $M$ is
a quandle with $a\rt b=ta+(1-t)b$, for $a,b \in M$, and is called
an {\it  Alexander  quandle}. 

\item
Let $n$ be a positive integer, and
for elements  $i, j \in \Z_n$, define $i\rt j = 2j-i \pmod{n}$. Then $\rt$ defines a quandle structure
called the {\it dihedral quandle}, and denoted by $R_n$, that
coincides with the set  of reflections in the dihedral group
with composition given by conjugation.

\item
Any group $G$ with the quandle operation: $a \rt b = ba^{-1}  b$ is a quandle called Core(G). 
\end{itemize}

The notions of quandle homomorphims and automorphisms are clear.  Let $X$ be a quandle, thus the second axiom of Definition~\ref{quandledef} makes any right multiplication by any element $x$, $R_x: y \mapsto y \rt x$, into a bijection .  The third axiom of Definition~\ref{quandledef} makes $R_x$ into a homomorphism and thus an automorphism.  Let $Aut(X)$ denotes the group of all automorphisms of $X$ and let $Inn(X):=<R_x, \; x \in X>$ denotes the subgroup generated by right multiplications.  The quandle $X$ is called \textit{connected} quandle if the group $Inn(X)$ acts transitively on $X$, that is, there is only one orbit.  
\section{Quandle structures on topological manifolds}
\begin{df}
\label{TopQuandle}
A \textit{topological rack} $X$ is a topological space with a rack binary operation $f(x,y):
X \times X \rightarrow X$, s.t. $f(x,y)$ is continuous with respect to the
topological structure, the right multiplication $R_x: X\rightarrow X, y \mapsto f(y,x)$ is a
homeomorphism for any $x\in X$, and satisfies the right distributivity:
\begin{equation} 
\label{distributivity}
f(f(x,y),z)=f(f(x,z),f(y,z)) ~ \forall x, y, z \in X. 
\end{equation} 
If, in addition, $f(x,x)=x$ for each
$x\in X$, then we say that $X$ is a \textit{topological quandle} (see examples $2.1-2.8$ in \cite{Rub}).
\end{df}

Next we will present a nontrivial  topological quandle structure on the unit interval $[0,1]$. Let us explain one method how to construct such structures in general. Consider a topological space $X=X_1 \cup X_2$ with the trivial quandle operation $f(x_2,x_1)=x_2$ if both points $x_1, x_2$ are in $X_1$ or in $X_2$ or $x_1 \in X_1$ and $x_2 \in X_2$. Suppose there exists a map $\varphi: X_2 \rightarrow Homeo(X_1)$, s.t. the image of $\varphi$ is a nontrivial commutative subgroup under composition. In addition assume that $\varphi (x_2) = id$, whenever $x_2\in X_1 \cap X_2$ and the function $R_{x_2}:X\rightarrow X$ given by 
\begin{equation}
R_{x_2}(x)=\begin{cases} x, \mbox{ for } x \in X_2 \\  \varphi_{x_2}(x) \mbox{ for } x \in X_1  \end{cases} 
  \end{equation}
is continuous. Then the function $\psi: X\times X\rightarrow X$ given by 
\begin{equation}
\psi(x,y):=\begin{cases} x, \mbox{ for both } x,y \in X_1  \mbox{ or }  x,y  \in X_2  \mbox{ or }  y \in X_1 \mbox{ and } x \in X_2\\ \varphi_{y}(x)  \mbox{ for } x \in X_1  \mbox{ and } y\in X_2,  \end{cases} 
  \end{equation}
  provided it is continuous, produces a nontrivial topological quandle structure on $X$. Theorem \ref{Counterrr} below is an instance of this construction for $X=[0,1]$ with $X_1=[0,\frac{1}{2}]$ and $X_2=[\frac{1}{2},1]$. Verification that the function produced satisfies the axioms of Definition \ref{TopQuandle} for general $X$ is completely analogous.
\begin{rmk}
Any quandle $X$ can be made into a topological quandle using the discrete topology on $X$.                                         
\end{rmk}
\begin{thm}
\label{Counterrr}
 The function $f:[0,1]\times [0,1]\rightarrow [0,1]$ given by 
\begin{equation}
f(x,y):=\begin{cases} x, \mbox{ for } y\leq \frac{1}{2} \mbox{ and } x \in [0,1] \mbox{ or } x \geq \frac{1}{2}  \mbox{ and } y \in [0,1] \\  \frac{1}{2}(2x)^{1+\varepsilon}  \mbox{ for } y = \frac{1}{2}+\varepsilon \mbox{ and } x\leq \frac{1}{2}   \end{cases} 
  \end{equation}
  provides a topological quandle structure on the unit interval $[0,1]$.
\end{thm}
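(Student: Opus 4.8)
The plan is to verify directly that $f$ satisfies the three axioms of Definition~\ref{quandledef} together with the continuity and homeomorphism requirements of Definition~\ref{TopQuandle}; this is precisely the instance of the construction described before the statement, with $X_1=[0,\tfrac12]$, $X_2=[\tfrac12,1]$ and $\varphi_{1/2+\varepsilon}(x)=\tfrac12(2x)^{1+\varepsilon}$. First one checks that $f$ is well defined and $[0,1]$-valued: if $x\le\tfrac12$ then $2x\in[0,1]$, hence $(2x)^{1+\varepsilon}\in[0,1]$ and $\tfrac12(2x)^{1+\varepsilon}\in[0,\tfrac12]$; and on the overlap of the two clauses, namely $y=\tfrac12$ (so $\varepsilon=0$) or $x=\tfrac12$, the two formulas return the same value ($x$, respectively $\tfrac12$), so there is no ambiguity. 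Axiom (I) is then immediate: for $x\le\tfrac12$ use the first clause with $y=x\le\tfrac12$, and for $x\ge\tfrac12$ use the first clause with $y=x$; in both cases $f(x,x)=x$.

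Next I would describe the right multiplications $R_y\colon c\mapsto f(c,y)$. For $y\le\tfrac12$ one has $R_y=\Id$. For $y=\tfrac12+\varepsilon$ with $\varepsilon\in(0,\tfrac12]$, $R_y$ is the identity on $[\tfrac12,1]$ and is the map $c\mapsto\tfrac12(2c)^{1+\varepsilon}$ on $[0,\tfrac12]$; this map is continuous, strictly increasing and fixes $0$ and $\tfrac12$, so $R_y$ is a continuous strictly increasing bijection of $[0,1]$ onto itself, hence (being a continuous bijection of a compact Hausdorff space) a homeomorphism. In particular, for all $a,b\in[0,1]$ there is a unique $c$ with $a=R_b(c)=f(c,b)$, which is axiom (II).

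The heart of the matter is right distributivity (III), $f(f(x,y),z)=f(f(x,z),f(y,z))$, which I would prove by a case analysis on the positions of $x,y,z$ relative to $\tfrac12$. If $z\le\tfrac12$ then $f(\cdot,z)=\Id$ and both sides equal $f(x,y)$. If $z=\tfrac12+\delta>\tfrac12$ and $x\ge\tfrac12$, then $f(x,w)=x$ for every $w$, so both sides equal $x$. If $z=\tfrac12+\delta>\tfrac12$, $x<\tfrac12$ and $y\le\tfrac12$, then $f(x,y)=x$ while $f(x,z),f(y,z)\in[0,\tfrac12]$, so both sides reduce to $\tfrac12(2x)^{1+\delta}$. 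The only genuinely nontrivial case is $x<\tfrac12$, $y=\tfrac12+\varepsilon>\tfrac12$, $z=\tfrac12+\delta>\tfrac12$: the left side equals $f\bigl(\tfrac12(2x)^{1+\varepsilon},z\bigr)=\tfrac12(2x)^{(1+\varepsilon)(1+\delta)}$, whereas on the right $f(x,z)=\tfrac12(2x)^{1+\delta}<\tfrac12$ and $f(y,z)=y=\tfrac12+\varepsilon$ (because $y\ge\tfrac12$), so the right side equals $\tfrac12\bigl(2\cdot\tfrac12(2x)^{1+\delta}\bigr)^{1+\varepsilon}=\tfrac12(2x)^{(1+\delta)(1+\varepsilon)}$; the two agree because multiplication of the exponents commutes — exactly the commutativity of the image of $\varphi$ built into the general construction.

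Finally, continuity of $f$ on $[0,1]\times[0,1]$ follows from the pasting lemma: on the closed set $\{y\le\tfrac12\}\cup\{x\ge\tfrac12\}$, $f$ agrees with the continuous function $(x,y)\mapsto x$; on the closed set $[0,\tfrac12]\times[\tfrac12,1]$ it agrees with $(x,y)\mapsto\tfrac12(2x)^{y+1/2}$; these two expressions coincide on the intersection of the two sets (where $y=\tfrac12$ or $x=\tfrac12$); and $(x,y)\mapsto\tfrac12(2x)^{y+1/2}$ is continuous on $[0,\tfrac12]\times[\tfrac12,1]$, the only delicate point being $x=0$, where $\tfrac12(2x)^{y+1/2}\to0$ uniformly in $y\in[\tfrac12,1]$ as $x\to0^+$ since the exponent stays in $[1,\tfrac32]$. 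I expect the bookkeeping in the distributivity case analysis to be the main obstacle; the rest is routine. The structure is genuinely nontrivial — the point of the statement in relation to Conjecture $5.2$ of \cite{CES} — since, for instance, $f(\tfrac14,1)=\tfrac12\cdot(\tfrac12)^{3/2}=\tfrac1{4\sqrt2}\neq\tfrac14$.
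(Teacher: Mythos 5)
Your proof is correct and follows essentially the same route as the paper: the same case analysis on the positions of $x,y,z$ relative to $\tfrac12$ for right distributivity (with the key case reducing to commutativity of exponent multiplication, $(1+\varepsilon)(1+\delta)=(1+\delta)(1+\varepsilon)$), plus the routine checks of idempotency and that each $R_y$ is a homeomorphism. You merely merge the paper's Cases 1 and 3 into the single case $x\ge\tfrac12$ and spell out the continuity/pasting argument that the paper dismisses as straightforward.
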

\begin{proof}
The properties that $f(x,y)$  is continuous and the right multiplication by $x$ is a homeomorphism for any $x\in  [0,1]$ are straightforward consequences of the definition of $f(x,y)$. Since $\forall x \in [0,1]$ one of the requirements $x\geq \frac{1}{2}$ or $x\leq \frac{1}{2}$ is satisfied, we always have $f(x,x)=x$. Hence, it remains to check distributivity  \eqref{distributivity}.
Notice that $z\leq\frac{1}{2}$ implies $f(f(x,y),z)=f(x,y)$ with $f(x,z)=x$ and $f(y,z)=y$ providing $f(f(x,z),f(y,z))=f(x,y)$, thus, confirming the equality \eqref{distributivity}. For $z>\frac{1}{2}$ it is convenient to do the case by case verification. 
 \begin{enumerate}
 \item[\textbf{Case 1}:] both $x,y \geq \frac{1}{2} $. Then $f(f(x,y),z)=f(x,z)=x$ and $f(f(x,z),f(y,z))=f(x,y)=x$.
 \item[\textbf{Case 2}:] both $x,y \leq \frac{1}{2}$. Then, using that $y \leq \frac{1}{2}$, write $f(f(x,y),z)=f(x,z)$, while $f(f(x,z),f(y,z))=f(x,z)$, as $f(y,z)\leq y  \leq \frac{1}{2}$.
 \item[\textbf{Case 3}:] $x \geq \frac{1}{2}$ and $y \leq \frac{1}{2}$. Then $f(f(x,y),z)=f(x,z)=x$, while $f(f(x,z),f(y,z))=f(x,f(y,z))=x$, where all equalities hold since $x \geq \frac{1}{2}$.
 \item[\textbf{Case 4}:] $x \leq \frac{1}{2}$ and $y \geq \frac{1}{2}$. Using that $y \geq \frac{1}{2}$, we obtain the equality $f(f(x,z),f(y,z))=f(f(x,z),y)$. This allows to rewrite \eqref{distributivity} as 
 \begin{equation} 
\label{distributivityCase4}
f(f(x,y),z)=f(f(x,z),y) ~ \forall x, y, z \in X. 
\end{equation} 
We write $y=\frac{1}{2}+\varepsilon'$ for some $0<\varepsilon'\leq \frac{1}{2}$. Similarly, as $z>\frac{1}{2}$, we will write $z=\frac{1}{2}+\varepsilon$ for some $0<\varepsilon \leq \frac{1}{2}$. We see that both sides of \eqref{distributivityCase4} are equal to $ \frac{1}{2}(2x)^{(1+\varepsilon)(1+\varepsilon')}$.

\end{enumerate}
\end{proof}
\begin{figure}
\begin{center}
\begin{tikzpicture}
\begin{axis}[axis lines=middle,
            xlabel=$x$,
            ylabel=$R_y(x)$,
            enlargelimits,
            ytick=\empty,
            xtick={0,2,4},
            xticklabels={0,$\frac{1}{2}$,1}]
\addplot[name path=F,domain={0:4}] {x} ;

\addplot[name path=G,red,domain={0:2}] {.5*x^2};
\addplot[name path=S,red,domain={0:2}] {2*(.5*x)^1.5};
\addplot[name path=L,red,domain={0:2}] {.25*x^3};

\end{axis}
\end{tikzpicture}
\end{center}
\caption{Right multiplication $R_y$ for the function $f(x,y)$.}
\end{figure}
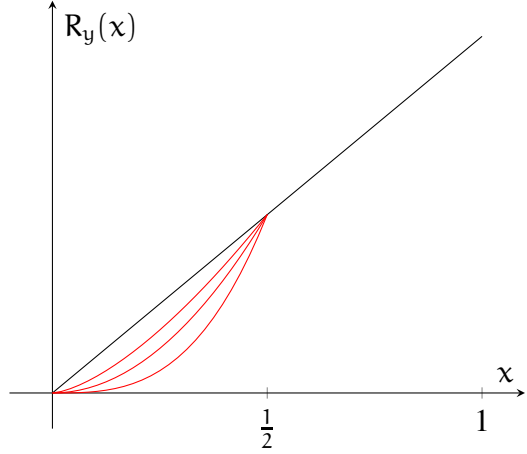
\begin{rmk}
\label{Preserves01}
Instead of $1+\varepsilon$ one can use any continuous function $h(\varepsilon)$ with the only requirement that $x^{h(\varepsilon)}$ is a homeomorphism on the closed interval $[0,\frac{1}{2}]$ for any $\varepsilon \in [0,\frac{1}{2}]$. Recall that every element of the group $Homeo([0,1])$ either preserves the endpoints $0$ and $1$ or switches them. The subgroup of homeomorphisms that preserve the endpoints is denoted by $Homeo^+([0,1])$. It is easy to show that, in fact,  $R_x \in Homeo^{+}([0,1])$ for every $x \in [0,1]$ (see Section $5.1$ in \cite{CES}). This implies that for any quandle structure on the interval $[0,1]$ the endpoints $0$ and $1$ are orbits.
\end{rmk}
\begin{rmk}
\label{UnitBall}
The function $f_{[a,b]}:[a,b]\times [a,b]\rightarrow [a,b]$ given by 
\begin{equation}
f_{[a,b]}(x,y):=\begin{cases} x, \mbox{ for } a\leq y\leq a+\frac{b-a}{2} \mbox{ and } x \in [a,b] \mbox{ or } \frac{(a+b)}{2} \leq x\leq b \mbox{ and } y \in [a,b] \\ a+\frac{b-a}{2}(\frac{2}{b-a}(x-a))^{1+\varepsilon}  \mbox{ for } y =  \frac{a+b}{2} +\varepsilon \mbox{ and } a\leq x\leq \frac{(a+b)}{2}   \end{cases} 
  \end{equation}
  provides a topological quandle structure on the closed interval $[a,b]$. We will also need a slight modification of this function for an open interval $(-1,1)$ given by 
  \begin{equation}
g_{(-1,1)}(x,y):=\begin{cases} x, \mbox{ for } -1< y\leq 0 \mbox{ and } x \in (-1,1) \mbox{ or }0 \leq x< 1 \mbox{ and } y \in (-1,1) \\ -1+(x+1)^{1+y(1-y^2)}  \mbox{ for } y > 0, x \in (-1,0), \\   \end{cases} 
  \end{equation}
  and, more generally, for $B^{\circ}$ an open unit ball in $\mathbb{R}^n$
  \begin{equation}
\Omega_{B^{\circ}}(x,y):=\begin{cases} x, \mbox{ for } -1< y_1\leq 0 \mbox{ and } x_1 \in (-1,1) \mbox{ or }0 \leq x_1< 1 \mbox{ and } y_1 \in (-1,1) \\ (-1+(x_1+1)^{1+y_1(1-\sum\limits_{i=1}^n y_i^2)(1-\sum\limits_{i=1}^n x_i^2)},x_2,\hdots, x_n)  \mbox{ for } y_1 > 0, x_1 \in (-1,0) \\   \end{cases} 
  \end{equation}
 yields a topological quandle structure with $f(x,y) \rightarrow x$ as $x$ or $y$ approach the boundary of the unit ball $\partial B^{\circ}$. Furthermore, 
choosing a homeomorphism $B^{\circ} \rightarrow B_{p,r}^{\circ}$
 yields a topological quandle structure on an open ball $B_{p,r}^{\circ}$ of radius $r$ centered at $p$ with $f(x,y) \rightarrow x$ as $x$ or $y$ approach the boundary $\partial B_{p,r}^{\circ}$, which we denote by $\Omega_{B_{p,r}^{\circ}}(x,y)$.
\end{rmk}

\begin{thm}
\label{InfMany}
There are infinitely many nontrivial nonisomorphic topological quandle structures on a closed interval $[a,b]$ and on the open unit ball $B^{\circ}$.
\end{thm}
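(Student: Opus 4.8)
The plan is to isolate one numerical invariant of topological quandles and then to write down, on each of the two spaces, infinitely many nontrivial structures along which that invariant strictly increases. For a topological quandle $Q$ put
\[
I(Q):=\sup_{x_0\in Q}\ \#\bigl\{\text{connected components of }\{x\in Q\colon R_x=R_{x_0}\}\bigr\}\ \in\ \{0,1,2,\dots\}\cup\{\infty\}.
\]
This is an isomorphism invariant: if $\phi\colon Q\to Q'$ is an isomorphism of topological quandles then $R'_{\phi(x)}=\phi\circ R_x\circ\phi^{-1}$, conjugation by $\phi$ is injective on homeomorphisms, and hence $\phi$ restricts to a homeomorphism of $\{x\colon R_x=R_{x_0}\}$ onto $\{x'\colon R'_{x'}=R'_{\phi(x_0)}\}$; letting $x_0$ range over $Q$ gives $I(Q)=I(Q')$. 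So it suffices, for each target space, to produce nontrivial topological quandle structures with $I$ equal to $2,4,6,\dots$ in turn.

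On $[0,1]$, for each integer $k\ge1$ I would take the structure $f_k$ obtained from Theorem~\ref{Counterrr} by replacing $1+\varepsilon$ with a continuous, nowhere locally constant function $h_k\colon[0,\tfrac12]\to[1,\infty)$ with $h_k(0)=1$ and $h_k>1$ on $(0,\tfrac12]$, whose restriction to $(0,\tfrac12)$ has exactly $2k$ maximal intervals of strict monotonicity (so $k$ local maxima and $k-1$ local minima), with all of its local maxima strictly above all of its local minima and above the value $h_k(\tfrac12)$ — concretely a piecewise-linear bump function with $k$ equal tall peaks. By Remark~\ref{Preserves01}, $f_k$ is a topological quandle structure on $[0,1]$, and it is nontrivial because $h_k\not\equiv1$. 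Computing fibres: if $x_0\le\tfrac12$ then $R_{x_0}=\Id$ and $\{x\colon R_x=R_{x_0}\}=[0,\tfrac12]$ is connected; if $x_0>\tfrac12$ then $R_x=R_{x_0}$ forces $x>\tfrac12$ together with $h_k(x-\tfrac12)=h_k(x_0-\tfrac12)=:v_0$, so this fibre is homeomorphic to the finite set $h_k^{-1}(v_0)\cap(0,\tfrac12]$, which has at most $2k$ points, with equality for any $v_0$ lying below every local maximum of $h_k$ and above every local minimum and above $h_k(\tfrac12)$. Hence $I(f_k)=\max(1,2k)=2k$, the $f_k$ are pairwise nonisomorphic, and transporting along a homeomorphism $[0,1]\cong[a,b]$ (or using the variant $f_{[a,b]}$ of Remark~\ref{UnitBall}) settles the case of an arbitrary closed interval.

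On the open unit ball, identified with $(-1,1)^n$ (both homeomorphic to $\mathbb{R}^n$), for each $k\ge1$ I would take $P_k:=\widehat g_k\times(\text{trivial quandle on }(-1,1)^{n-1})$, where $\widehat g_k$ is the structure on $(-1,1)$ obtained from $g_{(-1,1)}$ of Remark~\ref{UnitBall} by replacing the exponent factor $y(1-y^2)$ with a continuous, nowhere locally constant $H_k\colon(0,1)\to(0,\infty)$ tending to $0$ at both endpoints and again having $2k$ maximal monotone pieces with all $k$ local maxima strictly above all $k-1$ local minima. A product of topological quandles is a topological quandle, so $P_k$ is one and is nontrivial since $\widehat g_k$ is. Its right translations alter only the first coordinate and depend on it only through $H_k$, so the fibre $\{x\colon R_x=R_{x_0}\}$ equals $(-1,0]\times(-1,1)^{n-1}$ (connected) when the first coordinate of $x_0$ is $\le0$, and equals $H_k^{-1}(v_0)\times(-1,1)^{n-1}$ — at most $2k$ components, and exactly $2k$ for a suitable $v_0$ — when it is positive. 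Hence $I(P_k)=\max(1,2k)=2k$, and the $P_k$ are infinitely many pairwise nonisomorphic nontrivial topological quandle structures on $B^{\circ}$.

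The only point that really needs care is the design of the profiles $h_k$ and $H_k$: one must check that no fibre has more than $2k$ components — the fibre of $\Id$ is always connected, and every other fibre is (a slice of) a level set of a function with $2k$ maximal monotone pieces, hence of at most $2k$ points — while some fibre attains exactly $2k$. I do not anticipate a genuine obstacle beyond this elementary bookkeeping: the invariance of $I$, the verification that $f_k$ and $\widehat g_k$ satisfy the quandle axioms (identical to the checks in Theorem~\ref{Counterrr} and Remark~\ref{UnitBall}), the nontriviality of the structures, and the reductions $[a,b]\cong[0,1]$, $B^{\circ}\cong(-1,1)^n$ are all immediate.
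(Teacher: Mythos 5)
Your argument is correct, but it takes a genuinely different route from the paper's. The paper subdivides $[a,b]$ into $n$ equal subintervals, installs a rescaled copy $f_{[x_k,x_{k+1}]}$ of the basic structure on each, extends trivially, and separates the resulting quandles by counting the connected components of the set of points fixed by every right translation (a disjoint union of $n$ closed intervals); the ball case is handled the same way with $n-1$ small balls strung along a diameter. You instead keep a single active region and oscillate the exponent profile, separating the structures by the invariant $I(Q)=\sup_{x_0}$ (number of connected components of $\{x:R_x=R_{x_0}\}$), whose invariance you correctly deduce from $R'_{\phi(x)}=\phi\circ R_x\circ\phi^{-1}$. Your quandle verifications are legitimate: replacing $1+\varepsilon$ by $h_k$ only requires the identity $(a^{u})^{v}=(a^{v})^{u}$ used in Case 4 of Theorem \ref{Counterrr} and is exactly the freedom recorded in Remark \ref{Preserves01}; the fibre counts (one component when $R_{x_0}=\Id$, at most $2k$ and generically exactly $2k$ points otherwise) are right; and the product with a trivial factor is indeed again a topological quandle. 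As for what each approach buys: the paper's gluing transports verbatim to any positive-dimensional manifold through a single chart, which is how the subsequent Corollary is obtained, whereas your examples all share the same set of trivially acting elements (so the paper's component count of that set would not distinguish them) and instead isolate a finer, reusable isomorphism invariant; either way the theorem is established.
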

\begin{proof}
For any $n\in \mathbb{N}$ consider the points $x_k:=a+\frac{k(b-a)}{n}$ for $k\in \{0,1,\hdots, n-1\}$ and enhance each of the intervals $[x_{k},x_{k+1}]$ with quandle operation provided by the function $f_{[x_{k},x_{k+1}]}$. Complete this to a quandle operation on $[a,b]$ via 
\begin{equation}
F_n(x,y):=\begin{cases} f_{[x_{k},x_{k+1}]}(x,y), \mbox{ if } x,y \in [x_{k},x_{k+1}] \mbox{ for some } k\in \{1,\hdots, n-1\} \\ x, \mbox{ otherwise.} 
 \end{cases}   \end{equation}
Notice, that the set of points $X^n_{triv}:=\{x \in [a,b] | F_n(x,y)=x~ \forall y \in [a,b]\}$ consists of the points $\frac{x_k+x_{k+1}}{2}\leq x\leq x_{k+1}$, i.e. is a disjoint union of $n$ closed intervals. It is clear that for a quandle isomorphism $\varphi: ([a,b], F_s) \rightarrow ([a,b], F_k)$, one must have   $\varphi(X^s_{triv}) = X^k_{triv}$, which is impossible if $k\neq s$, since the map $\varphi$ is a homeomorphism.
\par
Next we verify the assertion for the open unit ball $B^{\circ}$. For any $n\in \mathbb{N}$ consider the points $p_k:=(-1+\frac{2k}{n},0,\hdots,0)$ for $k\in \{1,\hdots, n-1\}$ and enhance each of the open balls $B_{p_k,\frac{1}{n}}^{\circ}$  with quandle operation provided by the function $\Omega_{B_{p_k,\frac{1}{n}}^{\circ}}(x,y)$. Complete this to $\Omega_n(x,y)$, a quandle  operation on $B^{\circ}$ as above for $[a,b]$. The set of points $B^n_{nontriv}:=\{x \in B^{\circ} | F_n(x,y)\neq x~ \mbox{ for some }y \in B^{\circ}\}$ is a disjoint union of $n-1$ open halves of $B_{p_k,\frac{1}{n}}^{\circ}$'s  bounded by western hemispheres. This number is invariant under quandle isomorphisms in the family $(B^{\circ}, \Omega_n(x,y))$, hence, all such quandles are pairwise nonisomorphic.
\end{proof}
\begin{cor}
There are infinitely many nontrivial nonisomorphic topological quandle structures on any topological manifold $X$ of positive dimension.
\end{cor}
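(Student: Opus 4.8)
The plan is to transplant the family $\Omega_n$ from Theorem~\ref{InfMany} into a coordinate chart of $X$ and extend it by the trivial operation. Let $X$ be a topological manifold with $\dim X=d\geq 1$ and fix a point $p\in X$ (an interior point, if $X$ has boundary). By definition $p$ has an open neighbourhood $U$ homeomorphic to $\mathbb{R}^{d}$, hence to the open unit ball $B^{\circ}\subset\mathbb{R}^{d}$; fix such a homeomorphism $\theta\colon U\xrightarrow{\ \sim\ }B^{\circ}$. For each $n\in\mathbb{N}$, transport $\Omega_n$ through $\theta$ to a topological quandle operation $\Omega_n^{U}$ on $U$, and define $F_n\colon X\times X\to X$ by $F_n(x,y)=\Omega_n^{U}(x,y)$ if $x,y\in U$ and $F_n(x,y)=x$ otherwise.

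First I would verify that each $F_n$ is a topological quandle operation on $X$. Continuity on $U\times U$ is immediate, and off $\overline{U}$ the map is the projection $(x,y)\mapsto x$, so the only issue is continuity along $\partial U$; this follows from the boundary behaviour recorded in Remark~\ref{UnitBall} ($\Omega_n(x,y)\to x$ as $x$ or $y$ approaches $\partial B^{\circ}$), transported through $\theta$, exactly as in the gluings used there and in the proof of Theorem~\ref{InfMany}. Since $\Omega_n^{U}$ is a homeomorphism of $U$ which is the identity near $\partial U$, each right multiplication $R_y$ extends to a self-homeomorphism of $X$ equal to the identity outside $U$, giving axiom~(II); axiom~(I) holds because $\Omega_n^{U}(x,x)=x$ inside $U$ and $F_n(x,x)=x$ outside; and right distributivity~\eqref{distributivity} follows from a short case analysis according to which of $x,y,z$ lie in $U$, reducing in each case either to distributivity of $\Omega_n^{U}$ or to a triviality, in complete analogy with the proof of Theorem~\ref{Counterrr} and the general construction preceding it. For $n\geq 2$ this operation is nontrivial.

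It then remains to separate the $F_n$ up to isomorphism. Put $X^{n}_{nontriv}:=\{x\in X\mid F_n(x,y)\neq x\text{ for some }y\in X\}$. Since $F_n$ is trivial outside $U$, every such $x$ lies in $U$ with the witnessing $y$ also in $U$, so $\theta$ carries $X^{n}_{nontriv}$ onto $B^{n}_{nontriv}$, which by Theorem~\ref{InfMany} is a disjoint union of $n-1$ open half-balls; hence $X^{n}_{nontriv}$ has exactly $n-1$ connected components. If $\varphi\colon(X,F_n)\to(X,F_m)$ is a quandle isomorphism then $\varphi$ is a homeomorphism of $X$ with $\varphi(F_n(x,y))=F_m(\varphi x,\varphi y)$, so $F_n(x,y)\neq x$ if and only if $F_m(\varphi x,\varphi y)\neq\varphi x$; since $\varphi$ is bijective this forces $\varphi\bigl(X^{n}_{nontriv}\bigr)=X^{m}_{nontriv}$, and comparing numbers of connected components gives $n=m$. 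Thus $\{(X,F_n)\}_{n\geq 2}$ is an infinite family of pairwise nonisomorphic nontrivial topological quandle structures on $X$, which proves the corollary.

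The only genuine obstacle I anticipate is the continuity of $F_n$ across $\partial U$ inside $X$: one must check that the asymptotic statement ``$\Omega_n(x,y)\to x$ near $\partial B^{\circ}$'' really upgrades to continuity of the glued map, most cleanly by observing that $\Omega_n$ extends continuously to $\overline{B^{\circ}}\times\overline{B^{\circ}}$ with value $x$ whenever $x$ or $y$ lies in $\partial B^{\circ}$, and transporting this through $\theta$. The remaining verifications are bookkeeping or direct appeals to constructions already established in the paper.
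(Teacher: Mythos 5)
Your proposal is correct and follows essentially the same route as the paper: transport the family $\Omega_n$ from Theorem~\ref{InfMany} through a chart homeomorphism onto an open set $U\cong B^{\circ}$, extend by the trivial operation, and distinguish the resulting structures by the (topologically invariant) nontrivial locus. You supply more detail than the paper does on the continuity across $\partial U$ (correctly resting on the boundary behaviour built into $\Omega_{B^{\circ}}$ in Remark~\ref{UnitBall}) and on counting connected components of $X^{n}_{nontriv}$, but these are elaborations of the same argument, not a different one.
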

\begin{proof}
 Let $X$ be a topological manifold of dimension $n>0$ and $U$ an open subset homeomorphic to the open unit ball $B^{\circ}$ in $\mathbb{R}^n$. The homeomorphism will be denoted by $\varphi$. The function
 \begin{equation}
G(x,y):=\begin{cases} \varphi^{-1}(\Omega_{B^{\circ}}(\varphi(x),\varphi(y))), \mbox{ for } x,y \in U \\ x, \mbox{ otherwise,}  
 \end{cases} 
 \end{equation}
 where $\Omega_{B^{\circ}}$ is the function from Remark \ref{UnitBall}, endows $X$ with a topological quandle structure. Using the construction from Theorem \ref{InfMany}, we provide infinitely many nonisomorphic topological quandle structures on  $X$.
\end{proof}

\begin{ex}
\label{RealLine}
We illustrate how to apply the preceding construction to produce a topological quandle structure on the real line $\mathbb{R}$. Consider the homeomorphism $\varphi: \mathbb{R} \rightarrow (-\frac{\pi}{2},\frac{\pi}{2})$ given by the inverse tangent function $\varphi(x)=arctan(x)$. Then the function
 \begin{equation}
G(x,y):=\begin{cases} tan\left(-\frac{\pi}{2}+\frac{\pi}{2}\left(\frac{2}{\pi}\left(\alpha+\frac{\pi}{2}\right)^{1+\beta}\right)\right) \mbox{ for } x=tan(\alpha)<0, y=tan(\beta), \beta \in [0, \frac{\pi}{4}]  \\  tan\left(-\frac{\pi}{2}+\frac{\pi}{2}\left(\frac{2}{\pi}\left(\alpha+\frac{\pi}{2}\right)^{1+\frac{\pi}{2}-\beta}\right)\right) \mbox{ for } x=tan(\alpha)<0, y=tan(\beta), \beta \in [\frac{\pi}{4},\frac{\pi}{2})  \\ x, \mbox{ otherwise}  
 \end{cases} 
 \end{equation}
 gives a nontrivial topological quandle structure on the real line. 
\end{ex}
The following definition appeared in \cite{CES}.
\begin{df}
The group of inner automorphisms of a topological quandle $Inn(X)$ is the subgroup of $Homeo(X)$ generated by the elements $R_x$ for $x \in X$. If the group $Inn(X)$ acts transitively on $X$ then we say that $X$ is \textit{indecomposable}.
\end{df}
\begin{rmk}
In Section $3$ of \cite{CES} the authors considered affine quandles on $\mathbb{R}$.  These is a family of quandles $(\mathbb{R}, f_t)$ with the quandle structures given by the functions $f_t(x,y) = tx+(1-t)y$ for a fixed $0 \neq t \in \mathbb{R}$. Notice, that the real line with topological quandle structure from Example \ref{RealLine} contains a trivial subquandle $(-\infty,0)$. Therefore, it is not isomorphic to any of the quandles $(\mathbb{R}, f_t)$ (there are no trivial subquandles in $(\mathbb{R}, f_t)$ ).
\par
On the other hand any quandle $(\mathbb{R}, f_t)$ with $t\not\in\{0,1\}$ induces a nontrivial indecomposable topological quandle structure on an open interval $(a,b)$. 
\end{rmk}
\par
The following proposition appeared as Lemma $5.3$  in \cite{CES}. Here we present an alternative proof.
\begin{pro}
There are no nontrivial quandle structures on the closed unit interval $[0,1]$ with $f(x,y)$ a polynomial.
\end{pro}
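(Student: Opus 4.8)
The plan is to prove the contrapositive directly: every polynomial quandle operation $f$ on $[0,1]$ satisfies $f(x,y)=x$. Set $R_y(x):=f(x,y)$ and expand $f(x,y)=\sum_{m\ge 0}c_m(y)\,x^m$ with coefficients $c_m\in\mathbb{R}[y]$ (a finite sum). By Remark~\ref{Preserves01} every $R_y$ lies in $Homeo^+([0,1])$, so $R_y(0)=0$ and $R_y(1)=1$; together with idempotency, $R_y(y)=y$. Thus for each fixed $y$ the polynomial $f(x,y)-x$ in the variable $x$ vanishes at $x=0,\,y,\,1$, and a routine divisibility argument in $\mathbb{R}[x,y]$ (factor out $x$ using $f(0,y)=0$, then $x-1$ using $f(1,y)=1$, then $x-y$ using $f(y,y)=y$) gives $f(x,y)=x+x(x-1)(x-y)\,q(x,y)$ for some $q\in\mathbb{R}[x,y]$. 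In particular $c_0\equiv 0$ and $c_1(y)=1+y\,q(0,y)$, so $c_1(0)=1$ and $c_1$ is not identically zero.

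Next I would rewrite right distributivity \eqref{distributivity} as the identity $R_z\circ R_y=R_{R_z(y)}\circ R_z$ valid for all $y,z\in[0,1]$. Because each $R_y$ is a polynomial with $R_y(0)=0$, both sides are polynomials in $x$ whose coefficient of each power $x^m$ may be compared termwise; the key point is the \emph{Main Claim}: each $c_m$ is a constant polynomial. I would prove this by induction on $m$. Suppose $c_0,\dots,c_{m-1}$ are constants, with $c_0=0$ and $c_1=1$ (the $m=1$ base case uses only $c_0=0$), and write $R_y(x)=P(x)+c_m(y)\,x^m+O(x^{m+1})$ where $P(x)=x+c_2x^2+\cdots+c_{m-1}x^{m-1}$ is $y$-independent. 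Since $P(x)=x+O(x^2)$ one has $R_y(x)^m=x^m+O(x^{m+1})$ and $P'(P(x))=1+O(x)$, so expanding both sides of $R_z\circ R_y=R_{R_z(y)}\circ R_z$ to order $x^m$ yields $c_m(y)+c_m(z)$ on the left and $c_m(z)+c_m(R_z(y))$ on the right, whence $c_m(y)=c_m(R_z(y))$ for all $y,z$ (for $m=1$ this needs $c_1(z)\neq 0$, which holds for all but finitely many $z$ since $c_1(0)=1$).

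To deduce that $c_m$ is constant I would use that $c_m$ is a \emph{polynomial}: if $f$ is not the trivial operation then, since the endpoints are fixed by every $R_y$, there is a point $y_0\in(0,1)$ for which $z\mapsto f(y_0,z)=R_z(y_0)$ is non-constant; its image is a non-degenerate interval, and deleting the finitely many $z$ with $c_1(z)=0$ still leaves a set of values containing an open interval $J$. On $J$ the polynomial $c_m$ equals the constant $c_m(y_0)$, so $c_m$ is constant everywhere. This completes the induction, so $f(x,y)=\sum_m c_m x^m$ is independent of $y$, and then idempotency forces $f(x,y)=f(x,x)=x$. (If $f$ \emph{is} trivial there is nothing to prove.) Either way $f(x,y)=x$, which is the assertion.

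The step I expect to be the real work is the inductive coefficient comparison: one must check that, with $c_0,\dots,c_{m-1}$ already known to be $y$-free, substituting $R_y$ into $P$ and into the $m$-th power term contributes \emph{no} extra $y$-dependence below order $x^{m+1}$, so that the $x^m$-coefficients really are $c_m(y)+c_m(z)$ and $c_m(z)+c_m(R_z(y))$; this is exactly where the normalizations $c_0=0$, $c_1=1$ are essential. A secondary but genuinely needed point is the elementary fact that a one-variable polynomial constant on some open interval — here the interval $J$ carved out of the image of $z\mapsto f(y_0,z)$ after removing finitely many values — is constant, which is what upgrades the orbit relation $c_m(y)=c_m(R_z(y))$ into global constancy of $c_m$.
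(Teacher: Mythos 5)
Your argument is correct, but it takes a genuinely different and considerably longer route than the paper's. The paper's proof works only with the endpoint identities $f(0,y)\equiv 0$ and $f(1,y)\equiv 1$ from Remark~\ref{Preserves01} together with idempotency: it kills the coefficients $a_{0j}$, argues from $f(1,y)\equiv 1$ that $f$ must be a polynomial in $x$ alone, and then concludes with $f(x,x)=x$; axiom (III) is never invoked. You use the same three pointwise identities only to obtain the factorization $f(x,y)=x+x(x-1)(x-y)q(x,y)$, and the heart of your proof is distributivity \eqref{distributivity} in the form $R_z\circ R_y=R_{R_z(y)}\circ R_z$, fed into an induction showing that each Taylor coefficient $c_m(y)$ of $R_y$ at $0$ is constant, with the ``polynomial constant on an open interval is constant'' trick upgrading the orbit relation $c_m(y)=c_m(R_z(y))$ to global constancy. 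What your extra work buys is that it closes a real gap: the endpoint identities and idempotency alone do not force $y$-independence. For instance, $f(x,y)=x+x(x-1)(x-y)$ satisfies $f(0,y)=0$, $f(1,y)=1$, $f(x,x)=x$, and every $R_y$ is an increasing self-homeomorphism of $[0,1]$ (its $x$-derivative $3x^2-2(1+y)x+(1+y)$ has discriminant $4(1+y)(y-2)<0$ for $y\in[0,1]$), yet $f$ genuinely depends on $y$; correspondingly, the paper's deduction from $f(1,y)\equiv 1$ only yields that the column sums $\sum_i a_{ij}$ vanish for $j\geq 1$, not the individual coefficients, so distributivity really is needed and your proof is the one that supplies it. Two cosmetic points in your write-up: for $m=1$ the coefficient comparison is multiplicative, $c_1(y)c_1(z)=c_1(R_z(y))c_1(z)$, rather than additive (your parenthetical about needing $c_1(z)\neq 0$ shows you are aware of this), and the $x^m$-coefficients on the two sides each carry an identical constant contribution coming from $P(P(x))$, which cancels; neither point affects the conclusion.
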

\begin{proof}
Assume that $f(x,y)=\sum\limits_{i,j}a_{ij}x^iy^j$ is a polynomial function, satisfying the requirements of Definition \ref{TopQuandle}. Then $f(0,0)=0$ implies the constant term $a_{00}=0$. Recall that $f(0,y)=0 \mbox{ and } f(1,y)=1~\forall y \in[0,1]$  (see Remark \ref{Preserves01}). The first equality $f(0,y)=\sum\limits_{j}a_{0j}y^j=0$ implies that the polynomial $f(0,y)$ is identically $0$ and the coefficients $a_{0j}$ vanish, while $f(1,y)=\sum\limits_{i}a_{i0}+yP(y)=1$, where $P(y)$ is a polynomial, implies $yP(y)=0$ and, hence, $f(x,y)$ is a polynomial in $x$ only. The fact that $f(x,x)=x$ concludes the proof.
\end{proof}
\begin{rmk}
Arguing similarly, one can show that the Taylor series of an analytic function  $f(x,y)$ around $(0,0)$ converging on  $[0,1]\times[0,1]$ and providing a quandle structure on the closed unit interval $[0,1]$ must be $f(x,y)=x$.
\end{rmk}
\begin{rmk}
The argument of the  proposition also shows that all rack structures on the closed interval $[0,1]$ with $f(x,y)$ a polynomial function are given by polynomial functions in a single variable $f(x,y)=f(x)$ invertible as functions $[0,1]\rightarrow [0,1]$. 
\end{rmk}
\section{Concluding remarks}
The results of the preceding section imply that the study of rack/quandle structures on topological manifolds should be limited to certain classes of functions. One reasonable restriction might be to consider polynomial functions.

\noindent

\end{document}